\newcommand{\ar}{\mbox{area}}
\newcommand{\T}{\mathcal{T}}
\newcommand{\PP}{\mathcal{P}}
\newcommand{\cS}{\mathcal{S}}
\numberwithin{equation}{section}
\theoremstyle{plain}
\newtheorem{theorem}{Theorem}
\newtheorem{corollary}[theorem]{Corollary}
\newtheorem{lemma}[theorem]{Lemma}
\newtheorem{proposition}[theorem]{Proposition}
\theoremstyle{definition}
\newtheorem{definition}[theorem]{Definition}
\theoremstyle{remark}
\newtheorem{remark}[theorem]{Remark}
\begin{document}
\title{Conformal and cp types of surfaces of class $\cS$}
\author{Byung-Geun Oh}
\address{Department of Mathematics Education, Hanyang University, 17 Haengdang-dong, Seongdong-gu, Seoul 133-791, Korea}
\email{bgoh@hanyang.ac.kr}
\date{\today}
\subjclass[2000]{30F20, 52C26}
\keywords{type problem, Speiser graph, circle packing}

\begin{abstract}
In this paper we describe how to define the circle packing (cp) type
(either cp parabolic or cp hyperbolic) of a Riemann surface of class $\cS$, and
study the relation between this type and the conformal type of the surface.
\end{abstract}

\maketitle

\section{Introduction and the definition of cp type}
Let $X$ be a simply-connected Riemann surface, and suppose $h: X \to \overline{\mathbb{C}}$ is a meromorphic function.
The pair $(X,h)$ is called a \emph{Riemann surface of class $\cS$} if there exist $q$ points
$a_1,a_2,\ldots,a_q \in \overline{\mathbb{C}}$ such that the restriction map
\begin{equation}\label{covering}
 h: X \backslash \{h^{-1}(a_j)  : j=1,2,\ldots,q \} \to \overline{\mathbb{C}} \backslash \{a_1,\ldots,a_q\}
\end{equation}
is a topological covering map. In this case  $(X,h) \in F_q (a_1, \ldots, a_q)$ is a common notation, but the simplified notation
$X \in F_q$ is preferred when there is no confusion about the meromorphic function $h$ and the \emph{base}
points $a_1, \ldots, a_q \in \overline{\mathbb{C}}$.
Function theoretically $X \in F_q$ means that all the critical and asymptotic points of $h$ lie over only the finitely many points,
say $a_1, \ldots, a_q$.

Consider a closed Jordan curve $\Gamma'_0 \subset \overline{\mathbb{C}}$, called the \emph{base curve}, passing through
$a_1, a_2, \ldots, a_q$ in this order. Then we can think of $\Gamma'_0$ as a finite connected planar
graph with vertices $a_1, \ldots, a_q$ and edges $[a_i, a_{i+1}]$, $i=1,\ldots,q$ in mod $q$.
Therefore the dual graph of $\Gamma'_0$ is well-defined. Now we denote the dual graph by $\Gamma_0$, and observe that
the pull-back of $\Gamma_0$ via $h$, $\Gamma := h^{-1} (\Gamma_0)$,
is a connected planar graph that is bipartite and homogeneous of degree $q$. This graph $\Gamma$ is called
the \emph{Speiser graph} or the \emph{line complex} of $X \in F_q$.
Conversely, for a given connected planar graph $\Gamma$ that is bipartite and homogeneous of degree $q$,
it is possible to define a Riemann surface of class $\cS$ whose Speiser graph is $\Gamma$.
Furthermore, it is known that for fixed base points $a_1, \ldots, a_q$ and the base curve $\Gamma'_0$,
there is a one-to-one correspondence between (labeled) Speiser graphs and Riemann surfaces of class $\cS$. For more details,
see for example \cite[Chap. XI]{Ne} or \cite{DGP}.

By the famous uniformization theorem \cite[Chap. X]{Ahl}, every simply-connected Riemann surface $X$ is conformally equivalent to
one, and only one, of the following:
the unit disk $\mathbb{D}$, or the whole complex plane $\mathbb{C}$, or the Riemann sphere $\overline{\mathbb{C}}$.
Accordingly we say that the conformal type of $X$ is hyperbolic,
parabolic, or elliptic, respectively. Thus when we study a Riemann surface of class $\cS$, $(X,h) \in F_q$,
the conformal type of $X$ should have been already determined even before the meromorphic
function $h$ was considered. However, this does not mean that $h$ has nothing to do with the type of $X$.
In many cases it is even possible to determine the type of $X$
from the information about $h$. For example, we know from the Picard's theorem that $X$ must be hyperbolic
if $h$ omits three points in $\overline{\mathbb{C}}$.
This kind of process, or problem, that is, determining the conformal type of $X$ using the information about $h: X \to \overline{\mathbb{C}}$, is called the
\emph{type problem}.

The purpose of this paper is to study the type problem for Riemann surfaces of class $\cS$, and compare the conformal type of $X \in F_q$ with
its \emph{circle packing} type (cp type) which we will define later.

In the type problem the elliptic case is often excluded from the beginning and $X$ is assumed to be \emph{open}. This is because
when the Riemann surface is conformally equivalent to $\overline{\mathbb{C}}$,
then it is compact and consequently distinguished from the other two cases very easily.
Thus we always assume
that $X$ is conformally equivalent to either the whole plane $\mathbb{C}$ or the unit disk $\mathbb{D}$.
Moreover, to define the cp type appropriately, we assume that the meromorphic function
$h$ has \emph{no asymptotic values}. Then one can show that the restriction map in \eqref{covering} is
a covering map of \emph{infinite} order, the corresponding Speiser graph $\Gamma$ is an \emph{infinite} graph,
and $\Gamma'$, the dual of $\Gamma$, is nothing but the pull-back graph of $\Gamma'_0$; i.e.,
$\Gamma' = h^{-1} (\Gamma'_0)$. (When $h$ has an asymptotic value, then the pull-back $h^{-1} (\Gamma'_0)$ is not even a graph.
Note that a vertex in $\Gamma'_0$ can be lifted to a point at infinity.)

Let $V$ be an index set, and recall that an indexed \emph{circle packing}
$\PP = (P_v : v \in V)$ in the plane $\mathbb{C}$ is a collection of closed geometric disks with
disjoint interiors. The \emph{contacts graph}, or \emph{nerve}, of a circle packing $\PP$
is a graph whose vertex set is $V$ and such that an edge $[v,w]$ appears in the graph
if and only if $P_v$ and $P_w$ intersects. An \emph{interstice} of $\PP$ is a
connected component of the complement of $\cup_{v \in V} P_v$, and the \emph{carrier}
is the union of the packed disks and the \emph{finite} interstices.

Now we are ready to describe our problem. For a given Riemann surface $(X,h) \in F_q$ of class $\cS$, let $T_0$ be a triangulation of the Riemann sphere $\overline{\mathbb{C}}$ such that all the base points $a_1, \dots, a_q$ are contained in the vertex set of $T_0$.
We further assume that the pull-back graph $T := h^{-1} (T_0)$ is a disk triangulation.
Then there exists a circle packing $\PP$ in $\mathbb{C}$
whose nerve is combinatorially equivalent to (the 1-skeleton of) $T$
and whose carrier is either the whole complex plane $\mathbb{C}$
or the unit disk $\mathbb{D}$ \cite[Corollary 0.5]{HS1}. The graph $T$ is called
\emph{circle packing parabolic} type (cp parabolic) if the carrier of $\PP$ is $\mathbb{C}$,
and otherwise it is called \emph{cp hyperbolic}.
Our question is, does this cp type depend on the triangulation $T_0$ of $\overline{\mathbb{C}}$?

\begin{proposition}\label{T}
Suppose $(X,h)$ is a Riemann surface of class $\cS$ and $T_0$ and $\mathcal{T}_0$ are triangulations of
$\overline{\mathbb{C}}$ whose vertex sets contain all the base points
and such that their pull-back graphs $T := h^{-1} (T_0)$ and $\mathcal{T} :=  h^{-1} (\mathcal{T}_0)$  are disk triangulations.
Then the cp types of $T$ and $\mathcal{T}$ coincide.
\end{proposition}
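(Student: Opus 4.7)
The plan is to reduce both $T$ and $\mathcal{T}$ to a common pull-back refinement and then show that this kind of refinement cannot change the cp type. Concretely, I will construct a triangulation $S_0$ of $\overline{\mathbb{C}}$ that refines both $T_0$ and $\mathcal{T}_0$ and still has $a_1,\ldots,a_q$ in its vertex set, set $S := h^{-1}(S_0)$, and argue that each of $T$ and $\mathcal{T}$ has the same cp type as $S$.

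To build $S_0$, I would overlay $T_0$ and $\mathcal{T}_0$ on $\overline{\mathbb{C}}$, place a vertex at every crossing of a $T_0$-edge with a $\mathcal{T}_0$-edge, and triangulate the resulting polygonal faces by inserting diagonals. By construction $S_0$ is a triangulation of the sphere that contains all the $a_j$ and each of whose faces sits inside a face of $T_0$ and of $\mathcal{T}_0$. One checks that $S=h^{-1}(S_0)$ is a disk triangulation: at a non-critical preimage the link is a copy of the corresponding link in $S_0$, while at a preimage $p$ of some $a_j$ with local branching order $k$ the link of $p$ in $S$ is the connected $k$-fold cover of the link of $a_j$ in $S_0$, hence still a cycle. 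Because the restriction map in \eqref{covering} is a covering and every vertex of $T_0$ already lies in $S_0$, the faces of $S$ lying inside any given face of $T$ form the same finite combinatorial pattern throughout, namely the subdivision of the corresponding face of $T_0$ prescribed by $S_0$. Thus $S$ is a \emph{uniform} refinement of $T$, with a uniform bound on the number of faces of $S$ per face of $T$, and likewise for $\mathcal{T}$.

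The decisive step is to prove that such a uniform refinement preserves cp type. I would try either of two routes. The first is a direct circle-packing argument: starting from a circle packing of $S$ in its carrier, iteratively collapse each copy of the subdivision pattern back to a single ``super-triangle'' and check that the resulting combinatorial data is realized by a circle packing of $T$ whose carrier is of the same conformal type ($\mathbb{C}$ or $\mathbb{D}$). The second is to appeal to the He--Schramm characterization of cp type via the vertex extremal length to infinity and to verify that this extremal length changes only by a bounded multiplicative factor under the uniform refinement.

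The hardest point will be the refinement-invariance step itself: cp type is a delicate global property of the carrier, and refining a triangulation can a priori change the geometry of the packing drastically. An additional technical complication is that the vertex degrees in $T$, $\mathcal{T}$, and $S$ can be \emph{unbounded} --- at a preimage of any base point $a_j$ the degree equals $\deg_{T_0}(a_j)$ times the local branching order of $h$, which need not be bounded for surfaces of class $\cS$. This rules out a black-box application of the bounded-degree quasi-isometry invariance of cp type. The useful observation to push past this is that all high-degree vertices are preimages of the $a_j$ and are therefore common to $T$, $\mathcal{T}$, and $S$ with matching local cyclic structure, so the pathological local behavior is the same in all three graphs and cancels out in the comparison.
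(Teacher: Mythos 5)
Your reduction is exactly the paper's: form a common refinement $S_0$ of $T_0$ and $\mathcal{T}_0$ (its finiteness automatically makes it a semi-bounded, indeed bounded, refinement of each), pull it back to $S=h^{-1}(S_0)$, which is then a semi-bounded refinement of both $T$ and $\mathcal{T}$, and conclude \emph{provided} such a refinement cannot change the cp type. The problem is that this last step --- which you yourself single out as ``the decisive step'' and ``the hardest point'' --- is where essentially all of the content lies, and your proposal does not prove it: you name two candidate strategies and carry out neither. In the paper this step is exactly Proposition~\ref{TT}, and its proof consumes most of the paper. One direction (refinement VEL-parabolic $\Rightarrow$ original VEL-parabolic) is a direct v-metric transfer (Lemma~\ref{L:finer}); but the converse cannot be obtained by a symmetric transfer when valences are unbounded, and the paper needs three separate ingredients for it: the property $p(K)$ (every edge has an endpoint of degree at most $K$), the intermediate midpoint-subdivision $G^f$, which always satisfies $p(6)$, and a circle-packing argument with $\tau$-fat sets (Lemmas~\ref{union} and \ref{HS}) to get from $G$ to $G^f$ in Lemma~\ref{sfiner}. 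Neither of your two sketched routes supplies a substitute for this machinery, and your second route (``the extremal length changes only by a bounded multiplicative factor'') is precisely the assertion to be proved, not an argument for it.

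Your closing heuristic --- that the high-degree vertices are common to $T$, $\mathcal{T}$, and $S$, so the pathology ``cancels out'' --- does not survive inspection, because the comparison one must actually make is between a graph and its refinement, not between $T$ and $\mathcal{T}$ directly. Concretely, if you try to push a parabolic v-metric $m$ from $T$ to $S$ by giving each new vertex $w$ on an edge $[u,v]$ the mass $\max\{m(u),m(v)\}$, square-summability fails as soon as there are edges both of whose endpoints have large degree: a single vertex $v$ is then charged up to $M\deg(v)$ times, and with $\deg$ unbounded there is no uniform constant bounding $\sum_w m'(w)^2$ by $\sum_v m(v)^2$. The paper's Lemma~\ref{L:cfiner} evades this by taking the maximum only over the \emph{low-degree} vertices $V_w\setminus Z$, which is nonempty exactly because of $p(K)$; and since a general disk triangulation need not satisfy $p(K)$, one is forced through the detour $G\to G^f\to G''\to G'$, with the first arrow supplied by the fat-sets lemma rather than by any v-metric transfer. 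So the architecture of your argument matches the paper, but the refinement-invariance statement you invoke is the theorem itself, and it is missing.
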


The condition that $T$ (or $\mathcal{T}$) is a disk triangulation is a minor one which
can be easily satisfied. In fact, if $T_0$ has no edge of the form
$[a_i, a_j]$ for $i,j \in \{1,\ldots, q\}$, then $T$ must be a disk triangulation.
A bad case happens only when we have multiple edges in $T$ between two vertices lying over some base points $a_j$'s
(Figure~\ref{me}).

\begin{figure}
\begin{center}
 \input{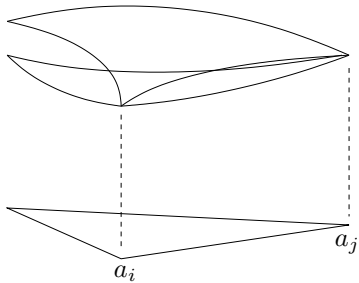}
 \caption{two vertices with a multiple edge}\label{me}
\end{center}
\end{figure}

Proposition~\ref{T} basically deals with the case when $\Gamma'$ is not of bounded valence, since
it is trivially verified when the local degree of $h$ is uniformly
bounded; i.e., when the dual Speiser graph $\Gamma'$ is of bounded valence.
This is because the pull-back triangulation $T$ is
roughly isometric (see  \cite{So}, p.~160, for the definition of rough isometries, which are also called \emph{quasi-isometries}
depending on the literature) to the dual Speiser graph $\Gamma'$,
hence $T$ is recurrent if and only if $\Gamma'$ is recurrent \cite[Theorem~7.18]{So} (we say that a graph is recurrent or transient if
the simple random walk on the graph is recurrent or transient, respectively).
Note that in this case $T$ is also of bounded valence.
Now from \cite[Theorem 1.1]{HS2}, where He and Schramm showed that
a bounded valence disk triangulation graph $T$ is recurrent if
and only if it is cp parabolic, we conclude that $T$ is cp parabolic if and only if $\Gamma'$ is recurrent.
Similarly $\mathcal{T}$ is cp parabolic if and only if $\Gamma'$ is recurrent, hence the cp types of
$T$ and $\mathcal{T}$ must coincide in this case.

In the course of a proof for Proposition~\ref{T} we encountered the following statement, which might be interesting by itself.

\begin{proposition}\label{TT}
Suppose $G$ is a disk triangulation graph and $G'$ is a semi-bounded refinement graph of $G$. Then $G$ is VEL-parabolic if and only if $G'$
is VEL-parabolic.
\end{proposition}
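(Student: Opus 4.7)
The plan is to prove both implications by transferring a vertex-weight witnessing VEL-parabolicity on one graph to a witness on the other, using the refinement relation and the semi-boundedness hypothesis to keep the transfer constants uniform. Recall that $G$ is VEL-parabolic iff for every finite $K\subset V(G)$ and every $\varepsilon>0$ there exists $m\colon V(G)\to[0,\infty)$ with $\sum_v m(v)^2\le\varepsilon$ and $\sum_{v\in\gamma}m(v)\ge 1$ for every path $\gamma$ from $K$ to infinity, and similarly for $G'$. Fix $K\subset V(G)\subset V(G')$ as a common base set.

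For the direction \emph{$G$ VEL-parabolic implies $G'$ VEL-parabolic}, given an admissible $m$ on $G$ I would extend it to $m'$ on $G'$ by setting $m'=m$ on $V(G)$ and assigning to each refinement vertex $v'$ inside a face $f$ of $G$ the value $\max\{m(u):u\in\partial f\}$. Any path in $G'$ from infinity to $K$ is shadowed by a path in $G$ obtained by replacing each excursion into the refinement of a face $f$ with a detour along $\partial f$; the detour contributes only uniformly many terms under the semi-boundedness hypothesis, and admissibility of $m$ then gives admissibility of a uniform multiple of $m'$. The $\ell^2$-norm of $m'$ is bounded by $C\sum_v m(v)^2$, with $C$ depending on a uniform upper bound for the number of refinement vertices per face of $G$.

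For the reverse direction, given an admissible $m'$ on $G'$ I would define $m$ on $V(G)$ by a local averaging, e.g.\ $m(v)=\sum_{v'\in N(v)} m'(v')$, where $N(v)$ contains $v$ together with the refinement vertices lying in faces of $G$ incident to $v$. A path $\gamma$ in $G$ from $K$ to infinity lifts to a $G'$-path of comparable $m'$-length along the same sequence of faces, so admissibility transfers; the $\ell^2$-estimate on $m$ follows from Cauchy--Schwarz together with a uniform bound on $|N(v)|$.

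The main obstacle is making the above transfer constants uniform in terms of the ``semi-bounded'' condition. Concretely, this hypothesis must supply both (i) a uniform bound on the number of refinement vertices inside each face of $G$, so that the $\ell^2$ bookkeeping in each direction carries only a bounded multiplicative loss, and (ii) a uniform bound on the combinatorial length of a path through a single refined face, so that shadowing and lifting preserve admissibility up to a constant. Once these two uniform bounds are extracted from semi-boundedness, the proof reduces to the standard pair of metric comparisons above; the subtlety lies entirely in confirming that the chosen notion of ``semi-bounded'' is strong enough to yield (i) and (ii) even when $G$ itself has unbounded face degrees.
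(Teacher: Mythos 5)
Your easy direction ($G'$ parabolic $\Rightarrow G$ parabolic) is essentially right once you drop the face-interior vertices from $N(v)$: a transient path in $G$ is already a transient path in $G'$ lying on the $1$-skeleton of $G$, so it suffices to set $m(v)$ equal to (a constant times) the maximum of $m'$ over the refinement vertices on the \emph{edges} incident to $v$; each vertex of $G'$ then gets counted at most twice and the $\ell^2$ bound is uniform. As you define it, with a sum over all refinement vertices in all incident faces, $|N(v)|$ is unbounded for two separate reasons (see below) and the $\ell^2$ estimate fails, but this is repairable and matches the paper's Lemma~\ref{L:finer}.

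The hard direction has a genuine gap, and it is exactly at the two ``uniform bounds'' you ask semi-boundedness to supply: neither one holds. A \emph{semi-bounded} refinement bounds the number of vertices of $G'$ on each \emph{edge} of $G$; it says nothing about the number of vertices of $G'$ in the interior of a \emph{face} of $G$, which can be arbitrary. So your bound (i) is simply unavailable, and (ii) fails as well since a path in $G'$ may wander through arbitrarily many interior vertices of a single face. Moreover, even if you had a per-face bound, your $\ell^2$ estimate for $m'(v')=\max\{m(u):u\in\partial f\}$ would still break: a vertex $u$ of $G$ lies on the boundary of $\deg_G(u)$ faces, so its weight is replicated roughly $\deg_G(u)$ times, and the whole point of the proposition (as the paper notes after Proposition~\ref{T}) is the case where $G$ has \emph{unbounded} valence. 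The paper's proof circumvents both problems: it assigns weight $0$ to all face-interior vertices of $G'$ and recovers admissibility by shadowing each excursion of a $G'$-path with a short arc $\Lambda_k$ on the $1$-skeleton (inequality \eqref{E:comparison}); it controls the replication by excluding high-degree vertices from the maximum, which requires the property $p(K)$ (every edge has an endpoint of degree $\le K$) as in Lemma~\ref{L:cfiner}; and since a general disk triangulation need not satisfy $p(K)$, it inserts a genuinely non-combinatorial step --- realizing $G$ as a circle packing with carrier $\mathbb{C}$ via He--Schramm and using the fat-set Lemma~\ref{HS} to show that the edge-midpoint subdivision $G^f$, which always satisfies $p(6)$, is VEL-parabolic (Lemma~\ref{sfiner}) --- before running the combinatorial transfer through a common refinement of $G^f$ and $G'$. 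Without some substitute for this packing step, a purely metric-transfer argument of the kind you outline cannot close.
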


The definitions for VEL-parabolicity and refinement graphs are given in \S\ref{VEL} and \S\ref{Finer}, respectively. After completion of this work
we learned that the same result as Proposition~\ref{TT} was obtained independently by Wood \cite{Wo} using a different method.

Proposition~\ref{T} allows us to define:

\begin{definition}\label{D:cptype}
A Riemann surface $(X,h)$ of class $\cS$ is called cp parabolic if
$T = h^{-1} (T_0)$ is cp parabolic, where $T_0$ is as in Proposition~\ref{T}.
Otherwise it is called cp hyperbolic.
\end{definition}

\begin{remark}
We excluded the elliptic case from our consideration, since in this case everything becomes very trivial. Note that if $X$ is the Riemann sphere,
then $h$ must be a rational map, hence the graph $T = h^{-1} (T_0)$ is a finite graph. Definitely this case is distinguished from the other two cases.
If needed, however, one can define that $(X,h)$ is cp elliptic if $T$ is a finite graph.
\end{remark}

As mentioned earlier, the conformal type of $X$ is uniquely determined by the uniformization theorem.
Then a natural question is,
does the conformal type of $X$ and cp type of $(X,h)$ agree? The answer for this question should be positive under some ``nice" conditions,
since certain finite circle packings approximate
the Riemann map as conjectured by Thurston \cite{Thu} and proved by Rodin and Sullivan \cite{RS}. For example, we have
the affirmative answer if the local degree of $h$ is
uniformly bounded. In this case one can show that $T$ is roughly isometric to the Speiser graph $\Gamma$ as well as the dual Speiser graph $\Gamma'$,
hence $T$ must be cp parabolic if and only if $\Gamma$ is recurrent as discussed preceding Definition~\ref{D:cptype}.
(Note that every Speiser graph is of bounded valence, since it is a homogeneous graph.)
Thus by applying Doyle's criterion \cite{Do} (cf. \cite{Me}) to this case, we see that the Speiser graph $\Gamma$ is
recurrent if and only if $X$ is conformally equivalent to $\mathbb{C}$.
In general, however, the conformal type of $X$ and cp type of $(X,h)$ do not have to match.

\begin{theorem}\label{Example}
There exists a parabolic Riemann surface of class $\cS$ that is cp hyperbolic.
\end{theorem}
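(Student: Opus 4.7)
The plan is to build an explicit Speiser graph $\Gamma$ that is $q$-regular and bipartite (hence determines a Riemann surface $(X,h) \in F_q$), is recurrent as an abstract graph (forcing $X$ to be conformally parabolic via Doyle's criterion, as discussed just before Theorem~\ref{Example}), yet whose faces have sizes tending to infinity along a suitable sequence of locations, so that the pull-back triangulation $T = h^{-1}(T_0)$ inherits vertices of unbounded degree concentrated in a pattern producing cp-hyperbolicity. The construction would begin with a well-known parabolic Speiser graph, for concreteness the square lattice $\mathbb{Z}^2$, which is $4$-regular, bipartite, and recurrent. I would select a sparse sequence of sites $\{v_k\} \subset \mathbb{Z}^2$ together with integers $n_k \to \infty$, and at each $v_k$ perform a local surgery that ``blows up'' a single face of $\mathbb{Z}^2$ into a face of perimeter $2n_k$, while preserving $4$-regularity and the bipartite $2$-coloring (the surgery glues in a planar bipartite $4$-regular ``balloon'' along a single edge). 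The resulting graph $\Gamma$ is infinite, so the corresponding $(X,h) \in F_4$ has no asymptotic values.

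To verify conformal parabolicity, I note that $\Gamma$ is of bounded valence ($=4$), so Doyle's criterion \cite{Do} gives $X \simeq \mathbb{C}$ if and only if $\Gamma$ is recurrent. Since the modifications occur at sparse sites and each adds only a finite subgraph along a single edge of $\mathbb{Z}^2$, a Nash--Williams cutset argument (using concentric ``squares'' around the root in $\mathbb{Z}^2$, avoided by finitely many balloons up to any fixed radius) would show that the effective resistance to infinity remains infinite, so $\Gamma$ stays recurrent provided $n_k$ and the spacings $|v_k|$ are compatibly chosen.

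For cp-hyperbolicity of $T$, pick a triangulation $T_0$ of $\overline{\mathbb{C}}$ whose vertex set contains $a_1,\ldots,a_4$ and such that $T = h^{-1}(T_0)$ is a disk triangulation; by Proposition~\ref{T}, the cp type of $T$ is independent of this choice. At the lift of the base point ``inside'' the $k$-th blown-up face, $T$ has a vertex of degree of order $n_k$, which is unbounded. Proposition~\ref{TT} reduces cp-hyperbolicity to showing that a suitable bounded-valence semi-bounded refinement $T^*$ of $T$ is VEL-hyperbolic, and by \cite[Theorem~1.1]{HS2} this is equivalent to transience of $T^*$. I would construct an explicit unit flow from a root to infinity in $T^*$ that funnels through the high-degree hubs (those lying over base points inside the balloons), using the $k$-th hub as a ``shortcut'' that transmits current along the $n_k$ incident refinement-triangles. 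A routine estimate would bound its energy by a series of the type $\sum_k 1/n_k + (\text{bulk energy})$, which is finite once the $n_k$ grow sufficiently rapidly, giving transience of $T^*$ and thus cp-hyperbolicity of $T$.

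The main obstacle is the balance required in the construction. Recurrence of $\Gamma$ demands that the balloons be sparse and that the $n_k$ not grow too violently, whereas cp-hyperbolicity of $T$ demands $n_k$ large enough, and hubs numerous enough, to admit a finite-energy flow in $T^*$. The crux of the proof lies in finding an explicit admissible regime of parameters $(n_k, |v_k|)$ in which the Nash--Williams lower bound for resistance in $\Gamma$ and the flow-energy upper bound in $T^*$ hold simultaneously; once such a regime is exhibited, the rest of the argument is a matter of assembling the standard tools cited in the paper.
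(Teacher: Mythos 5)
Your construction has a fatal structural flaw: the high-degree vertices you create cannot make $T$ cp hyperbolic, because they sit inside finite, localized appendages. Each ``balloon'' is a finite graph attached to the ambient lattice along a single edge, i.e.\ at boundedly many vertices. Consequently, in $T$ the vertex of degree $\sim 2n_k$ lying over the base point inside the $k$-th blown-up face is joined only to vertices of that same finite balloon; it is a hub inside a dead end, not a long-range shortcut. Any transient path in $T$ must leave every balloon and therefore cross all but finitely many combinatorial annuli of the ambient triangulated $\mathbb{Z}^2$, which contain $O(R)$ vertices at radius $R$. Hence the standard parabolic v-metric $m(v)=1/(R\log R)$ on the $R$-th ambient annulus, extended by $0$ on the balloon interiors, is a parabolic v-metric for $T$; equivalently, every unit flow to infinity in your refinement $T^*$ must spread across these $O(R)$-wide cutsets away from the balloons and so has energy at least $c\sum_R 1/R=\infty$. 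Thus $T$ is cp \emph{parabolic} no matter how the parameters $(n_k,|v_k|)$ are tuned: sparse local surgeries on a recurrent Speiser graph do not change the VEL type of the pull-back triangulation, so the ``admissible regime'' you hope to find does not exist. A secondary but genuine error is your use of Doyle's criterion: it does not say that $X$ is parabolic iff $\Gamma$ is recurrent, but iff the \emph{extended} Speiser graph (with infinite square grids attached over each face) is recurrent. The reduction to recurrence of $\Gamma$ itself requires the faces of $\Gamma$ to have bounded size (bounded local degree of $h$), which your construction deliberately violates since $n_k\to\infty$.

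The paper's construction is global rather than local, and that is exactly what is needed. It starts from the $3$-regular tessellation $\Psi$ with octagonal faces, whose dual $\Psi'$ is the $8$-regular disk triangulation and hence VEL-hyperbolic, and then replaces every edge joining $S_\Psi(n)$ to $S_\Psi(n+1)$ by an unbranched tree of odd length $l_n=\exp(3^{n+1})$ (with every second edge doubled so the result is still a Speiser graph). This radial stretching slows the volume growth to $|B_\Gamma(k)|\le k\log k$, whence the extended Speiser graph satisfies $|B_\Upsilon(k)|\le Ck^2\log k$ and is recurrent by Nash--Williams, giving conformal parabolicity via Doyle's criterion; meanwhile the dual $\Gamma'$ still contains $\Psi'$ as a subgraph (adjacent faces of $\Psi$ remain adjacent after stretching), so $\Gamma'$ stays VEL-hyperbolic and Corollary~\ref{hyperbolic} yields cp hyperbolicity. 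The discrepancy between the two types is produced by keeping the face-adjacency combinatorics hyperbolic while making the metric growth of the graph slow --- not by decorating a parabolic graph with large faces.
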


If we extend the definition of cp-type to the surfaces with asymptotic values (Remark~\ref{remk}, p.~\pageref{remk}), then
the surface in \cite{GM} serves as a counterexample to the other part of the implication; that is, the surface constructed in Section 2
of \cite{GM}, although it has some asymptotic values, is the example of $X \in F_q$ such that
$X$ is of cp parabolic but conformally equivalent to the disk.

\section{Vertex Extremal Length and Circle Packings}\label{VEL}
Let $G = (V, E)$ be a graph, where $V$ is the vertex set and $E$ is the edge set of $G$. Every edge $e \in E$
is associated with two vertices $v, w \in V$, saying that $e$ is incident to $v$ and $w$, or $e$ connects $v$ and $w$.
In this case we write $e= [v,w]$, and $v$ and $w$ are called the endpoints of $e$. This notation might be confusing, however,
since we allow multiple edges between two vertices. (Most Speiser graphs have multiple edges; for example see Figure~\ref{ES}.)
On the other hand, we always assume that there is no self-loop.

A closed subset of the plane is called a \emph{face} of $G$ if it is the closure of a component of $\mathbb{C} \setminus G$.
Two vertices $v$ and $w$ are  \emph{neighbors} if $[v,w] \in E$, and we denote by $N(v)$ the set of neighbors of $v \in V$.
The \emph{degree} or \emph{valence} of $v \in V$ is defined by the number of edges from $v$ to its neighbors, and
denoted by $\deg (v)$. We say that the graph $G$
is of bounded valence if $\sup \{ \deg (v) : v \in V \} < \infty$.
A \emph{path} $\gamma$ in $G$ is a finite or infinite sequence $[v_0, v_1, \ldots]$ of vertices
such that $[v_i, v_{i+1}] \in E$ for every $i=0,1,\ldots$. We denote by $V(\gamma)$ the set of vertices which the path $\gamma$ visits,
and an infinite path is called \emph{transient} if it visits infinitely many distinct vertices.
A graph is called connected if every two vertices in the graph can be connected by a finite path, and infinite if it contains infinitely
many vertices.
Throughout this paper, we always assume that every graph is connected and infinite, unless otherwise stated. (For given $(X, h) \in F_q$,
the base curve $\Gamma'_0$ and its dual $\Gamma_0$  are finite, though.)

A graph $G$ is called \emph{planar} if there is an embedding $\iota$ from (the 1-complex of) $G$ into $\mathbb{C}$, and its image $\iota(G)$ is
called the embedded graph. In general there are some differences between a planar graph and its embedded graphs, since
there could be two or more topologically different embeddings of the same planar graph,
but we do not distinguish a planar graph from its embedded graph. A planar graph
is called \emph{locally finite} if the cardinality of $V \cap K = \iota(V) \cap K$ is finite for every compact set $K \subset \mathbb{C}$.

\begin{definition}\label{vmetric}
A graph $G = (V,E)$ is called \emph{Vertex Extremal Length parabolic}, or \emph{VEL-parabolic}, if there exists a function
$m : V \to \mathbb{R}^+ \cup \{ 0 \}$ such that
\begin{align*}
 &\mbox{(a)} \,\, \sum_{v \in V} m(v)^2 < \infty, \\
 &\mbox{(b)}  \sum_{v \in V(\gamma)} m(v) = \infty \quad \mbox{for every transient path } \gamma.
\end{align*}
A function $m$ satisfying these properties will be called \emph{parabolic v-metric}.
If there is no such v-metric, $G$ is called VEL-hyperbolic.
\end{definition}

He and Schramm showed in \cite{HS2} that a disk triangulation graph $G$ is
VEL-parabolic if and only if $G$ is cp parabolic. If $G$ is of bounded
valence, the VEL-parabolicity and recurrence of $G$ are equivalent
(\cite{HS2}, Theorem~8.1 and Theorem~2.6).

The following definition is due to Schramm \cite{Sh}.

\begin{definition}
Let $\tau > 0$. A Lebesque measurable set $A \subset \mathbb{C}$
is called \emph{$\tau$-fat} if for every $x \in A$ and $r >0$ such that
$D(x,r) := \{ z : |z-x| < r \}$ does not contain $A$, the following inequality holds:
\[
\mbox{area}(A \cap D(x,r)) \geq \tau \cdot \mbox{area}(D(x,r)).
\]
\end{definition}

Here $\mbox{area}(\cdot)$ denotes the 2-dimensional Lebesque measure.
A typical example of a fat set is an Euclidean disk. For instance, if $D = D(0,1)$, $0 \leq x < 1$, and
$0 < r < 1+x$, then $D(x - r/2, r/2) \subset D \cap D(x, r)$.
Therefore,
\[
 \ar (D \cap D(x,r)) \geq \ar (D(x- r/2, r/2)) = (1/4) \cdot \ar (D(x,r)),
\]
showing that $D$ is 1/4-fat.

\begin{lemma}\label{union}
Suppose $A$ and $B$ are $\tau$-fat sets for some $\tau > 0$ and
$A \cap B \ne \emptyset$. Then $A \cup B$ is $\tau /4$-fat.
\end{lemma}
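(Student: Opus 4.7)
The plan is a short case analysis on the containment $D(x, r) \supset A$ and on the distance between $x$ and a suitably chosen point in $A \cap B$. By the symmetry between $A$ and $B$, I would fix $x \in A \cup B$ and $r > 0$ with $D(x, r) \not\supset A \cup B$, and assume $x \in A$. The goal is then to produce a subset of $(A \cup B) \cap D(x, r)$ of area at least $(\tau/4)\ar(D(x, r))$.

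First I would dispose of the easy case $A \not\subset D(x, r)$: the $\tau$-fatness of $A$ applied directly at the pair $(x, r)$ gives $\ar(A \cap D(x, r)) \geq \tau \ar(D(x, r))$, which is stronger than needed.

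In the remaining case $A \subset D(x, r)$, the hypothesis $D(x, r) \not\supset A \cup B$ forces $B \not\subset D(x, r)$. I would then pick any $y \in A \cap B$; since $A \subset D(x, r)$, this $y$ satisfies $|x - y| < r$, and I would split on whether $|x - y| \leq r/2$ or $|x - y| > r/2$. In the first sub-case the triangle inequality gives $D(y, r/2) \subset D(x, r)$, so $D(y, r/2) \not\supset B$ (otherwise $D(x, r)$ would already contain $B$), and $\tau$-fatness of $B$ at $(y, r/2)$ yields $\ar(B \cap D(y, r/2)) \geq \tau \pi (r/2)^2 = (\tau/4)\ar(D(x, r))$; since $B \cap D(y, r/2) \subset (A \cup B) \cap D(x, r)$, we are done. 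In the second sub-case $y \in A$ lies outside $D(x, r/2)$, so $A \not\subset D(x, r/2)$, and $\tau$-fatness of $A$ applied at $(x, r/2)$ gives $\ar(A \cap D(x, r/2)) \geq (\tau/4)\ar(D(x, r))$, which again sits inside $(A \cup B) \cap D(x, r)$.

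The only real subtlety is the second sub-case. The naive attempt would be to use fatness of $B$ at $y$ with the largest radius $r - |x - y|$ that still keeps $D(y, \cdot)$ inside $D(x, r)$; but this radius can be arbitrarily close to $0$, so it cannot on its own produce a bound proportional to $\ar(D(x, r))$. The trick I would use is to exploit the fact that $y$ lies in $A$ (not just in $B$): its being at distance greater than $r/2$ from $x$ automatically forces $A$ to escape $D(x, r/2)$, and we can apply the fatness of $A$ with the \emph{fixed} radius $r/2$. The halving of the radius in both sub-cases is exactly what produces the factor of $4$ in $\tau/4$.
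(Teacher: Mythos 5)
Your proof is correct and follows essentially the same strategy as the paper's: halve the radius, apply the fatness of $A$ at $(x,r/2)$ when $A$ escapes the half-disk, and otherwise re-center at a point $y\in A\cap B$ and apply the fatness of $B$ at $(y,r/2)$, with the halved radius accounting for the factor $\tau/4$. The only difference is cosmetic (you organize the argument into three cases where the paper uses two), so nothing further is needed.
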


\begin{proof}
Suppose $x \in A \cup B$. Without loss of generality, we assume that $x \in A$.
If $D(x, r/2)$ does not contain $A$, then
\begin{align*}
 \mbox{area}((A \cup B) \cap & D(x,r)) \geq \ar(A \cap D(x,r))
    \geq \ar(A \cap D(x,r/2)) \\
 & \geq \tau \cdot \ar (D(x, r/2)) = (\tau /4) \cdot \ar (D(x,r))
\end{align*}
as desired.

If $D(x, r/2)$ contains $A$ but $D(x, r)$ does not contain $A \cup B$,
there exists $y \in A \cap B \subset B$ such that
$|x-y| < r/2$. Since $D(y, r/2) \subset D(x,r)$,
$A \subset D(x, r/2)$, and $A \cup B \nsubseteq D(x,r)$, one can easily see that
$D(y, r/2)$ does not contain $B$. Therefore,
\begin{align*}
 \mbox{area}( (A \cup B) \cap & D(x,r)) \geq \mbox{area}( B \cap D(x,r)) \geq
 \ar(B \cap D(y,r/2)) \\
 & \geq \tau \cdot \ar (D(y, r/2)) = (\tau /4) \cdot \ar (D(x,r)),
\end{align*}
which completes the proof.
\end{proof}

We conclude this section with the following lemma.

\begin{lemma}[Lemma 3.4 of \cite{HS2}]\label{HS}
Suppose $G = (V,E)$ is a locally finite planar graph with infinitely many vertices, and $\PP = (P_v : v \in V)$ is
a collection of $\tau$-fat sets satisfying the following properties:
\begin{enumerate}[(1)]
 \item for every $v \in V$, $P_v$ is a compact connected set in $\mathbb{C}$;
 \item $\PP$ is locally finite in $\mathbb{C}$; that is, for every compact set $K \subset \mathbb{C}$,
       there are only finitely many $v \in V$ such that $P_v \cap K \ne \emptyset$;
 \item every $x \in \mathbb{C}$ is contained in $P_v$ for at most $M < \infty$
       vertices $v \in V$, where $M$ does not depend on $x$;
 \item if $[v,w] \in E$, then $P_v \cap P_w \ne \emptyset$.
\end{enumerate}
Then $G$ is a VEL-parabolic graph.
\end{lemma}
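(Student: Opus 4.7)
The plan is to construct a parabolic v-metric $m\colon V\to[0,\infty)$ directly from $\mathcal{P}$, using $\tau$-fatness in the role played by the radii in a disk packing. My model is the continuous parabolicity of $\mathbb{C}$: the weight $\rho(z):=1/(|z-o|\log|z-o|)$ satisfies $\int\rho^{2}\,dA<\infty$ yet $\int_\gamma\rho\,|dz|=\infty$ along every curve to infinity, and $m(v)$ will be a discrete surrogate for $\rho$ on $P_v$. Fixing $o\in\mathbb{C}$ and $R_v:=\sup_{z\in P_v}|z-o|$ (so $R_v\geq\mathrm{diam}(P_v)/2$ by the triangle inequality), I would tentatively set
\[
m(v):=\frac{\mathrm{diam}(P_v)}{R_v\log R_v}\quad\text{for }R_v\geq e,
\]
with $m(v)=0$ on the finitely many exceptional vertices, which by condition~(2) do not affect either sum.

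The crucial structural input is that $\tau$-fatness forces $\mathrm{diam}(P_v)^{2}\leq(4/\tau\pi)\,\mathrm{area}(P_v)$: if $x,y\in P_v$ realize $d:=\mathrm{diam}(P_v)$, then $D(x,d/2)$ cannot contain $P_v$ (it misses $y$), so by fatness $\mathrm{area}(P_v)\geq\tau\pi d^{2}/4$. To verify (a), I would group vertices into shells $U_n:=\{v:R_v\in[e^{n},e^{n+1})\}$, noting that $P_v\subset D(o,2R_v)\subset D(o,2e^{n+1})$ for $v\in U_n$; the bounded overlap hypothesis~(3) then gives $\sum_{v\in U_n}\mathrm{area}(P_v)\leq 4\pi Me^{2(n+1)}$, and combined with the fatness inequality this yields $\sum_{v\in U_n}m(v)^{2}\leq C/n^{2}$, whence $\sum_v m(v)^{2}<\infty$.

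For (b), take a transient path $\gamma=(v_0,v_1,\dots)$. Condition~(2) forces $\bigcup_i P_{v_i}$ to be unbounded, while (1) and (4) make it connected, so projecting by the $1$-Lipschitz map $z\mapsto|z-o|$ onto $[0,\infty)$, the image of a suitable subpath of $\gamma$ covers every interval $[e^{n},e^{n+1}]$ for large $n$. Summing the $1$-dimensional measures of the projections of the individual $P_{v_i}$'s meeting the annulus $A_n:=\{e^{n}\leq|z-o|<e^{n+1}\}$ gives
\[
\sum_{i:\,P_{v_i}\cap A_n\neq\emptyset}\mathrm{diam}(P_{v_i})\geq e^{n+1}-e^{n}.
\]
Converting this annulus-by-annulus estimate into a divergent lower bound on $\sum_i m(v_i)$ is the part I expect to be the hardest: a single very large $P_v$ (with $R_v\gg e^{n}$) may realize most of the crossing of $A_n$ while contributing only $m(v)\lesssim 1/\log R_v$, so a careless accounting can lose a logarithmic factor. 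I would address this by invoking condition~(3) a second time, to prevent too many large sets from stacking along the path, so that each scale $n$ contributes at least a constant multiple of $1/n$ to $\sum_i m(v_i)$ and the total diverges harmonically. Should pathological configurations where $R_{v_j}$ grows faster than exponentially along $\gamma$ defeat this direct approach, I would fall back on replacing the point evaluation of $\rho$ by the integral $m(v)^{2}:=\int_{P_v}\rho^{2}\,dA$; then (a) follows at once from bounded overlap, and (b) is obtained via a Cauchy--Schwarz comparison with a polygonal approximation of $\gamma$ threaded through $\bigcup_i P_{v_i}$.
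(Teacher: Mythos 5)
First, a point of comparison: the paper does not actually prove this lemma --- it quotes it as Lemma~3.4 of He--Schramm \cite{HS2} and only remarks that the He--Schramm proof for fat \emph{packings} goes through for fat collections with bounded overlap. So your proposal is competing with the He--Schramm argument, not with anything in this paper. Your verification of condition (a) is correct and is essentially the standard one: the fatness inequality $\operatorname{area}(P_v)\geq \tau\pi\operatorname{diam}(P_v)^2/4$ is right (take $x,y$ realizing the diameter, which is attained by compactness, and note $y\notin D(x,\operatorname{diam}(P_v)/2)$), and the shell decomposition together with hypothesis (3) gives $\sum_{v\in U_n}m(v)^2\lesssim_{\tau,M} n^{-2}$.

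The gap is in condition (b), exactly where you flagged it, and it is not repairable by ``invoking condition (3) a second time.'' Consider the chain of pairwise tangent closed disks $P_{v_j}=\overline{D}(c_j,\rho_j)$ along the positive real axis, with $\rho_j=e^{2^j}$ and $c_{j+1}=c_j+\rho_j+\rho_{j+1}$. This is an honest circle packing: each disk is $1/4$-fat (as shown in the paper), the interiors are disjoint so (3) holds with $M=2$, the collection is locally finite, and the contacts graph is the ray $v_1-v_2-v_3-\cdots$, which is certainly VEL-parabolic (take $m(v_j)=1/j$). But $c_j\leq 2\rho_j$, so $R_{v_j}\leq 3\rho_j$ and your primary metric gives $m(v_j)=2\rho_j/(R_{v_j}\log R_{v_j})\asymp 2^{-j}$, whence $\sum_j m(v_j)<\infty$ along the unique transient path: (b) fails. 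Your fallback $m(v_j)^2=\int_{P_{v_j}}\rho^2\,dA$ fails on the same example, since $P_{v_j}$ lies in $\{2\rho_{j-1}\lesssim|z|\lesssim 3\rho_j\}$ and $\int_{a}^{b}\frac{2\pi\,dr}{r\log^2 r}=2\pi(\frac{1}{\log a}-\frac{1}{\log b})$, giving $m(v_j)\asymp 2^{-j/2}$, again summable. The moral is that a single fat set spanning the radial range $[a,b]$ must be charged a quantity bounded below by a \emph{constant}, not by $\approx 1/\log b$ as any discretization of $1/(r\log r)$ does; no weight of the form ``$\rho$ evaluated on $P_v$'' can see this, because such a set short-circuits $\log(b/a)$ dyadic scales at the price of one vertex. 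This is why He--Schramm argue through the extremal-length formulation (VEL-parabolic $\iff$ $\operatorname{VEL}(v_0,\infty)=\infty$, which is equivalent to your v-metric definition by a standard normalization-and-summation argument): for each dyadic annulus $A_k$ one uses the truncated weight $m_k(v)=\min\bigl(1,\operatorname{diam}(P_v\cap A_k)/2^k\bigr)$, so that every crossing of $A_k$ has $m_k$-length $\geq 1$ while the $m_k$-area is $\leq C(\tau,M)$ --- the key point being that fatness plus (3) allow at most $C(\tau,M)$ sets of diameter $\gtrsim 2^k$ to meet $A_k$, so the truncation costs only a bounded amount of area per scale --- and one then sums over infinitely many scales via the serial rule. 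That combinatorial truncation is the missing idea; without it the argument cannot be completed.
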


In fact, He and Schramm showed the above lemma
when $\PP$ is a $\tau$-fat \emph{packing}, but one can
easily check that their proof also works in this case.

\section{Refinement Graphs and VEL-Parabolicity}\label{Finer}
 Let $G$ and $G'$ be two locally finite planar graphs.
 We say that $G'$  is a \emph{refinement} graph of $G$ if every face and edge of $G$
 are unions of a finite number of faces and edges, respectively, of $G'$.
 A refinement graph $G'$ of $G$ is called \emph{bounded} (or \emph{semi-bounded}) if there exists an absolute constant $M>0$ such that
 every face (or every edge, respectively) of $G$ contains at most $M$ vertices of $G'$.

 \begin{lemma}\label{L:finer}
 Suppose $G' = (V', E')$ is a semi-bounded refinement graph of $G = (V, E)$.
 If $G'$ is VEL-parabolic, so is $G$.
 (Equivalently, if $G$ is VEL-hyperbolic, so is $G'$.)
 \end{lemma}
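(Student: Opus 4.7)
The plan is to lift a parabolic v-metric from $G'$ to one on $G$, absorbing into each vertex $v \in V$ both its own $m'$-mass and the mass sitting on the interiors of the $G$-edges at $v$. Let $m'$ be a parabolic v-metric on $G'$; note that $V \subseteq V'$, since the endpoints of a $G$-edge are $G'$-vertices. For each $e \in E$ I set
$$ w(e) := \sum_{u \in V' \cap \operatorname{int}(e)} m'(u), $$
a sum of at most $M$ terms by the semi-bounded hypothesis (with $\operatorname{int}(e)$ denoting the relative interior of $e$ as an embedded arc). I would then define
$$ m(v) := m'(v) + \max_{e \in E \,:\, e \ni v} w(e), \qquad v \in V. $$

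Finite energy is a short estimate: using $(a+b)^2 \le 2(a^2+b^2)$, $(\max_e x_e)^2 \le \sum_e x_e^2$, the Cauchy--Schwarz bound $w(e)^2 \le M \sum_{u \in V' \cap \operatorname{int}(e)} m'(u)^2$, and the disjointness of the interiors of distinct $G$-edges, one obtains $\sum_{v \in V} m(v)^2 \le (2+4M) \sum_{u \in V'} m'(u)^2 < \infty$. The use of $\max$ rather than a sum is essential: $G$ may have vertices of unbounded degree (as in the Speiser-graph setting of Proposition~\ref{T}), and a sum over incident edges would introduce an uncontrollable factor of $\deg_G(v)$.

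For the length condition on an arbitrary transient path $\gamma$ in $G$, I would first reduce to the simple case. The subgraph $(V(\gamma), E(\gamma))$ is connected, infinite, and locally finite, hence contains an infinite simple ray $\tilde\gamma = (\tilde v_0, \tilde v_1, \dots)$ (a standard consequence of K\"onig's lemma), and $\sum_{V(\gamma)} m \ge \sum_{V(\tilde\gamma)} m$. Lifting $\tilde\gamma$ to $G'$ by splicing in, along each $\tilde e_i := [\tilde v_i, \tilde v_{i+1}]$, the $G'$-vertices that $\tilde e_i$ contains yields a transient path $\tilde\gamma'$. Simplicity of $\tilde\gamma$ means that $V(\tilde\gamma')$ decomposes as the disjoint union of $V(\tilde\gamma)$ and the interior $G'$-vertices of the $\tilde e_i$, and so VEL-parabolicity of $G'$ gives
$$ \sum_{v \in V(\tilde\gamma)} m'(v) + \sum_{i \ge 0} w(\tilde e_i) = \sum_{u \in V(\tilde\gamma')} m'(u) = \infty. $$
If the first summand is infinite, the inequality $m \ge m'$ concludes; if the second, then $m(\tilde v_{i+1}) \ge w(\tilde e_i)$ combined with the distinctness of the $\tilde v_{i+1}$ gives $\sum_{V(\tilde\gamma)} m \ge \sum_i w(\tilde e_i) = \infty$.

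The main obstacle is reconciling the two conditions in the presence of unbounded $\deg_G$. The $\max$ handles the energy side but delivers only one $w(e)$-mass per vertex, so one cannot afford to double-count on the length side; the simple-ray reduction is precisely what lets each $w(\tilde e_i)$ be paid for by a distinct forward-endpoint $\tilde v_{i+1}$, avoiding any factor of degree.
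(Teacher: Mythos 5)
Your proof is correct and follows essentially the same strategy as the paper's: both lift $m'$ to $m$ on $V$ by taking a \emph{maximum} of $m'$-mass over the star of each $G$-vertex, precisely so that the energy estimate costs only a bounded factor (each $G'$-vertex lies in at most two stars) while the interior mass of each $G$-edge is still dominated by the $m$-values of its endpoints, avoiding any factor of $\deg_G$. Your explicit reduction to a simple ray is a welcome extra detail: the paper's closing inequality $\sum_i \tfrac12\{m(v_i)+m(v_{i+1})\}\le\sum_{v\in V(\gamma)}m(v)$ is only valid as written when the path does not revisit vertices, so it tacitly relies on the same reduction.
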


 \begin{proof}
  For every given $v \in V$, we define the starlike set centered at $v$ by
 \[
 E_v := \left( \bigcup_{w \in N(v)} [v,w] \right) \backslash N(v).
 \]
 Here $N(v)$ is the set of neighbors in $G$, not in $G'$, and note that $E_v$ is a subset contained in (the 1-skeleton of) $G$.
 For future reference, we also let $\overline{E}_v = E_v \cup N(v)$.

 Suppose that $m'$ is a parabolic v-metric that is obtained from the VEL-parabolicity of $G'$.
 Since $G'$ is a semi-bounded refinement graph of $G$, there are at most $M>0$ vertices of $G'$ that
 are contained in one edge of $G$.
 Let
 \[
 m(v) := 2 M \cdot \max \{ m'(w) : w \in E_v \cap V' \} \qquad \mbox{for all } v \in V,
 \]
 and we claim that $m$ is a parabolic v-metric of $G$.

 First we show that $m$ is square-summable. But this is easy because every vertex $w  \in V'$ is
 contained in $E_v$ for at most two $v \in V$, thus we have
 \begin{align*}
  \sum_{v \in V} & m(v)^2
  = \sum_{v \in V} 4 M^2 \cdot \max \{ m'(w)^2 : w \in E_v \cap V' \} \\
 & \leq 4 M^2 \sum_{v \in V} \left( \sum_{w \in E_v \cap V'} m'(w)^2 \right)
   \leq 8 M^2 \sum_{w \in V'} m'(w)^2 < \infty,
 \end{align*}
 as desired.

 Now suppose $e = [u,v]$ is an edge of $G$. Then because $e \subset E_u \cup E_{v}$,
 \begin{align*}
  & ~~~ \sum_{w \in (e \cap V')}  m'(w) \leq M \cdot \max \{ m'(w) : w \in (E_u \cup E_{v}) \cap V' \}\\
  & \leq  M \cdot \max \{ m'(w) : w \in E_u \cap V'\} + M \cdot \max \{ m'(w) : w \in E_{v} \cap V' \} \\
  & = \frac{1}{2} \{m(u) + m(v)\}.
 \end{align*}
 Therefore, because any transient path $\gamma = [v_0, v_1, v_2, \ldots]$ in $G$
 also can be realized as a transient path
 $\gamma' = [v_0, v_0^1, \ldots v_0^{k_0}, v_1, \ldots, v_2, \ldots]$ in $G'$, we have
 \begin{align*}
 \infty = & \sum_{w \in V(\gamma')} m'(w) \leq
 \sum_{i=0}^\infty \left( \sum_{w \in [v_i, v_{i+1}] \cap V'} m'(w) \right) \\
 &\leq \sum_{i=0}^\infty \frac{1}{2} \{m(v_i) + m(v_{i+1})\}
 \leq \sum_{v \in V(\gamma)} m(v),
 \end{align*}
 which completes the proof.
 \end{proof}

Suppose $G=(V,E)$ is a planar graph and let $K$ be a positive integer.
We say that $G$ satisfies the property $p(K)$
if $\min \{ \deg (v), \deg(w) \} \leq K$ for all
$[v,w] \in E$.

\begin{lemma}\label{L:cfiner}
Suppose $G = (V, E)$ is a disk triangulation graph satisfying the property
$p(K)$ for some $K > 0$ and let $G' = (V', E')$ be a semi-bounded refinement graph
of $G$. If $G$ is VEL-parabolic, then $G'$ is also VEL-parabolic.
(Equivalently, VEL-hyperbolicity of $G'$ implies VEL-hyperbolicity of $G$.)
\end{lemma}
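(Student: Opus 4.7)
The plan is to apply Lemma~\ref{HS} to $G'$, building the required fat sets from a circle packing of $G$. Since $G$ is a VEL-parabolic disk triangulation, the He--Schramm theorem cited in~\S\ref{VEL} furnishes a circle packing $\PP = (P_v)_{v \in V}$ with nerve $G$ and carrier $\mathbb{C}$; each $P_v$ is $\tfrac{1}{4}$-fat. Set $V_K := \{v \in V : \deg_G(v) \le K\}$. Property $p(K)$ says every edge of $G$ has an endpoint in $V_K$, so every triangular face of $G$ has at least two corners in $V_K$. The Rodin--Sullivan ring lemma provides a constant $c = c(K) > 0$ with $r_u \ge c \cdot r_v$ whenever $v \in V_K$ and $u$ is a neighbor of $v$; in particular, for any face $F = [v_1, v_2, v_3]$ with $v_1, v_2 \in V_K$ the ratios $r_{v_1}/r_{v_2}$ and $r_{v_2}/r_{v_1}$ are bounded by $c^{-2}$, and the interstice of $F$ has inradius comparable to $\min_i r_{v_i}$ up to a constant depending only on $K$.

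I would construct compact connected sets $(Q_w)_{w \in V'}$ as follows. For $w \in V$, set $Q_w := P_w$. For $w$ in the interior of an edge $e = [u_1, u_2]$ of $G$, choose $u_w \in \{u_1, u_2\} \cap V_K$ (which exists by $p(K)$) and set $Q_w := P_{u_w}$. Subdivision vertices on a common edge then share the disk, and each shares its disk with $Q_{u_w}$, so condition~(4) of Lemma~\ref{HS} holds along subdivided edges. For $w$ in the interior of a face $F$ of $G$, I triangulate $G'|_{\overline F}$ (adding diagonal edges so that every $G'$-face within $\overline F$ becomes triangular) and invoke the Koebe--Andreev--Thurston theorem, followed by a M\"obius transformation sending the three outer circles of the resulting packing to $P_{v_1}, P_{v_2}, P_{v_3}$; then let $Q_w$ be the mini-disk corresponding to $w$. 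Neighboring interior $G'$-vertices then have tangent mini-disks, and any interior $G'$-vertex adjacent in $G'$ to a boundary subdivision vertex has its mini-disk tangent to the corresponding $P_{u_w}$, so condition~(4) is satisfied throughout $\overline F$.

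Conditions (1)--(3) of Lemma~\ref{HS} are routine: compactness, connectedness, and local finiteness are immediate; bounded multiplicity combines two counts. For each $v \in V_K$, the disk $P_v$ is used as $Q_w$ for at most $\deg_G(v) \cdot M + 1 \le KM + 1$ indices; the interior mini-disks have multiplicity one inside each face, and mini-disks from distinct faces lie in disjoint interstices. The main obstacle is the face-interior step: semi-boundedness imposes no bound on $n_F := \#(V' \cap \mathrm{int}(F))$, so a naive choice such as $Q_w := P_{v_F}$ for a $V_K$-corner $v_F$ would destroy the multiplicity bound. Property $p(K)$ is what rescues the argument, because through the ring lemma it delivers uniformly bounded interstice geometry, and it is this uniformity that allows the inscribed Koebe--Andreev--Thurston packing to yield $\tau$-fat mini-disks with $\tau = \tau(K)$ depending only on $K$ and $M$, independent of $n_F$ or the particular face.
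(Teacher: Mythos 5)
Your overall strategy---realize $G$ as a circle packing with carrier $\mathbb{C}$ and then verify the hypotheses of Lemma~\ref{HS} for a family of fat sets indexed by $V'$---is the same idea the paper uses for the special refinement $G^f$ in Lemma~\ref{sfiner}, but for an arbitrary semi-bounded refinement it breaks down exactly at the step you flag as ``the main obstacle,'' and the rescue you describe does not go through. Concretely: the induced triangulation of $\overline F$ has as boundary vertices not just the three corners $v_1,v_2,v_3$ but also all the subdivision vertices on $\partial F$, so its Koebe--Andreev--Thurston packing has many outer circles, and the three corner circles are in general \emph{not} mutually tangent. A M\"obius transformation cannot then be chosen to carry them onto the mutually tangent triple $P_{v_1},P_{v_2},P_{v_3}$ (that normalization is a $3$-real-parameter family short of even matching three arbitrary circles, and here the tangency patterns disagree). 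Worse, even granting some normalized packing of $\mathrm{int}(F)$, condition~(4) of Lemma~\ref{HS} fails for the $G'$-edges joining an interior vertex $w$ of $F$ to a boundary subdivision vertex $w'$: you have already set $Q_{w'}:=P_{u_{w'}}$, a disk of the \emph{original} packing, whereas the mini-disk of $w$ is tangent only to whatever circle the face packing assigns to $w'$; nothing forces it to meet $P_{u_{w'}}$. Your closing appeal to $p(K)$ and the ring lemma does not address this, and it is aimed at the wrong target anyway: every Euclidean disk is $1/4$-fat regardless of its size, so fatness of the mini-disks was never in danger---the danger is intersection (condition~(4)) and multiplicity, and the unboundedness of $n_F$ is what makes a purely geometric assignment hard.

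For contrast, the paper's proof avoids geometry in the face interiors entirely: it transfers the parabolic v-metric $m$ of $G$ directly to a v-metric $m'$ on $V'$ by setting $m'(w)=0$ for every $w\in V'$ not on the $1$-skeleton of $G$, $m'(w)=m(w)$ for $w$ at a high-degree vertex ($w\in Z$), and $m'(w)=3\max\{m(v):v\in V_w\setminus Z\}$ for the remaining vertices on the $1$-skeleton. The zero weight makes square-summability immune to the unbounded number of face-interior vertices, with $p(K)$ guaranteeing both that $m'$ is well defined and that each $v\in V\setminus Z$ is charged at most $KM$ times. The real work is then the length condition: a transient path $\gamma'$ in $G'$ may spend arbitrarily long stretches in face interiors where $m'$ vanishes, and the paper handles this by decomposing $\gamma'$ into subpaths $\gamma'_k$ each contained in a single face or in the star $F_v$ of a vertex $v\in Z$, and producing compact sets $\Lambda_k$ in the $1$-skeleton of $G$ whose union contains a transient path of $G$ and whose $m$-length is dominated by the $m'$-length of $\gamma'_k$. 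If you want to salvage a packing-based argument, you would need to supply some analogue of this mechanism for controlling what happens inside a face containing unboundedly many vertices of $G'$; as written, the proposal has a genuine gap there.
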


\begin{proof}
For $x \in \mathbb{C}$ contained in the 1-skeleton of $G$, we will use the expression  $x \in G$.
Also for $w \in (G \cap V')$, we define
$V_w = \{v \in V : w \in \overline{E}_v\}$. Note that if $w \in (G\cap V') \setminus V$, then $V_w$ consists of two vertices of $V$,
the endpoints of the edge containing $w$, while $V_w = N(w) \cup \{ w \}$ if $w \in V$.
Also let $Z := \{ v \in V : \deg (v) > K \}$. Here $\deg (v)$ is the number of edges incident to $v$ in the graph $G$, not in $G'$,
but we will consider $Z$ a subset of $V'$ as well as $V$, since $V \subset V'$.

Let $m$ be the v-metric defined on $V$, which is assumed to exist by the VEL-parabolicity of $G$,
and we define for all $w \in V'$
\[
   m'(w) :=
 \begin{cases}
 m(w),  &\mbox{if } w \in Z;\\
 3 \cdot \max \{m(v) : v \in V_w \setminus Z \}, &\mbox{if } w \in G \setminus Z;\\
 0, &\mbox{otherwise}.
 \end{cases}
\]
Note that  $V_w \setminus Z$ is always nonempty since $G$ satisfies the property $p(K)$. Consequently,
$m'$ is well-defined.

There are at most $M>0$ vertices of $G'$ contained in one edge of $G$,
since $G'$ is a semi-bounded refinement graph of $G$.
Thus if $v \in (V \backslash Z)$, $\overline{E}_v$ contains at most $KM$ vertices of $G'$.
In other words, every $v \in (V \backslash Z)$ is contained in $V_w$ for at most $KM$
different vertices $w \in G \cap V'$. Therefore,
\begin{align*}
 \sum_{w \in V'} m'(w)^2 &= \sum_{w \in Z} m'(w)^2 +
                                \sum_{w \in (G \cap V') \backslash Z} m'(w)^2  + \sum_{w \in V' \setminus G} m'(w)^2\\
 &\leq \sum_{v \in Z} m(v)^2 + 9 \sum_{w \in (G \cap V') \backslash Z}
   \left( \sum_{v \in (V_w \backslash Z)} m(v)^2 \right) + 0\\
 &\leq \sum_{v \in Z} m(v)^2 + 9  KM \sum_{v \in (V \backslash Z)} m(v)^2 \\
 &\leq 9KM \sum_{v \in V} m(v)^2 < \infty,
\end{align*}
which shows that $m'$ is square summable.

Let $F$ be the face set of $G$, and for each $v \in Z$ let $F_v$ be  the union of the faces of $G$ containing $v$ on their boundaries.
We define
\[
 \mathfrak{F}_Z := \{ F_v : v \in Z \} \cup \{ f : f \in F \mbox{ and } f \nsubseteq F_v \mbox{ for any } v \in Z \}.
\]
In other words, an element of $\mathfrak{F}_Z$ is a face of $G$ if none of the vertices on its boundary belongs to $Z$,
and other elements of $\mathfrak{F}_Z$ are $F_v$'s for $v \in Z$. Definitely the interiors of the elements in $\mathfrak{F}_Z$ are mutually disjoint
 and we have $\bigcup_{f \in \mathfrak{F}_Z} f = \mathbb{C}$.
Now suppose $\gamma' = [w_0, w_1, \ldots]$ is a transient path in $G'$.
Let $\{ w_0 = w_{i_0}, w_{i_1}, w_{i_{2}}, \ldots \}$ be a subset of $G \cap V(\gamma') \subset G \cap V'$ whose elements are indexed so that
the finite sub-path $\gamma'_k = [w_{i_k}, w_{i_k +1}, w_{i_k +2}, \ldots, w_{i_{k +1}-1}, w_{i_{k+1}}]$
is contained in $f_k$ for some $f_k \in \mathfrak{F}_Z$. Also note that $w_{i_k}, w_{i_{k+1}} \in \partial f_k$,
where $\partial f_k$ denotes the boundary of $f_k$. 

For each $k$, we will find a compact set $\Lambda_k$  which is contained in the 1-skeleton of $G$ and
whose $m$-length is comparable to the $m'$-length of $\gamma_k'$.
Let $\Lambda_k = \partial f_k$  if $f_k$ is a triangle;
i.e., if $\partial f_k \cap Z = \emptyset$. If $f_k = F_v$ for some
$v \in Z$, we first consider the case that the subarc $\gamma'_k$ passes through
the vertex $v$. In this case, we define $\Lambda_k$ as the
union of all edges of $G$ with one end at $v$ and the other end at
a vertex in $\partial F_v \cap (V_{w_{i_k}}  \cup V_{w_{i_{k+1}}})$ (Figure~\ref{Lambda}). Note that there are at most six such
edges, and $\Lambda_k$ is a union of exactly six edges only when
both $w_{i_k}$ and $w_{i_{k+1}}$ are in $V$. Finally if $f_k = F_v$ for
some $v \in Z$ but the subarc $\gamma'_k$ does not pass through $v$,
there exist $f^1, f^2, \ldots f^l \in F$ which are contained in $F_v = f_k$
such that $f^j \cap \gamma'_k \ne \emptyset$, $j = 1,2, \ldots, l$.
In this case we define $\Lambda_k = \partial F_v \cap (\partial f^1 \cup \partial
f^2 \cup \cdots \cup \partial f^l)$.

\begin{figure}
\begin{center}
 \input{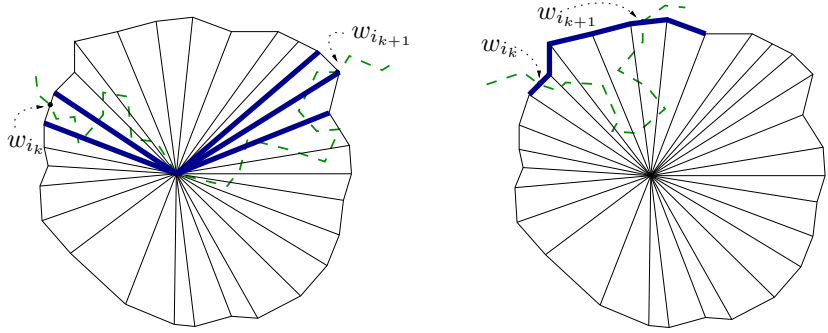}
 \caption{the cases when $\gamma'_k$ passes through $v \in Z$ (left) and passes near but not through $v \in Z$ (right)}\label{Lambda}
\end{center}
\end{figure}

From the definition of $m'$ and $\Lambda_k$, it is easy to see that
\begin{equation}\label{E:comparison}
 \sum_{v \in (\Lambda_k \cap V)} m(v) \leq \sum_{w \in (\gamma'_k \cap V')} m'(w) \leq \sum_{j= {i_k}}^{i_{k+1}} m'(w_j)
\end{equation}
for all $k = 0, 1, 2, \ldots$. Furthermore,
$\Lambda := \bigcup_{k=1}^\infty \Lambda_k$ is a connected unbounded set,
hence there exists a transient path $\gamma \subset \Lambda \cap G$.
Therefore by \eqref{E:comparison},
\begin{align*}
 \infty = &  \sum_{v \in V(\gamma)} m(v) 
        \leq \sum_{k=0}^\infty \left( \sum_{v \in (\Lambda_k \cap V)} m(v) \right) \\
        & \leq \sum_{k=0}^\infty \left( \sum_{w \in (\gamma'_k \cap V')} m'(w) \right)
        \leq 2 \sum_{w \in V(\gamma')} m'(w),
\end{align*}
which completes the proof.
\end{proof}

\section{Proof of Propositions~\ref{T} and \ref{TT}}
Suppose $G = (V,E)$ is a disk triangulation graph. We divide each face of $G$ into
four triangles by connecting the midpoints of the edges, and get a new disk triangulation graph $G^f$. 
Formally, the vertex set of $G^f$ is $V \sqcup E$, the disjoint union of $V$ and $E$,
and an edge $[v,w]$ appears in $G^f$ if and only if (i) $v,w$ are two different edges
of $G$ belonging to the boundary of the same face of $G$; or (ii) $v \in V$, $w \in E$, and $v$ is an end point
of $w$; or (iii) $v \in E$, $w \in V$, and $w$ is an end point of $v$.
Trivially $G^f$ is a disk
triangulation and refinement graph of $G$.
Also note that $G^f$ satisfies the property $p(K)$ with $K=6$.

\begin{lemma}\label{sfiner}
Suppose $G = (V,E)$ is a disk triangulation graph
and $G^f = (V^f, E^f)$ is as above.
Then $G$ is VEL-parabolic if and only if $G^f$ is VEL-parabolic.
\end{lemma}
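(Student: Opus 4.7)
The ``only if'' direction follows immediately from Lemma~\ref{L:finer}: $G^f$ is a semi-bounded refinement of $G$ (with $M=1$), since every edge of $G$ contains exactly the single midpoint vertex of $G^f$ and every face of $G$ contains only the three midpoints on its boundary.

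For the converse, Lemma~\ref{L:cfiner} does not apply directly since the coarser graph $G$ need not satisfy the property $p(K)$ for any $K$. My plan is to construct a parabolic v-metric $m^f$ on $G^f$ out of a parabolic v-metric $m$ on $G$. A natural first attempt is $m^f(v)=m(v)$ for $v\in V$ and $m^f(e)=\max\{m(u),m(w)\}$ for $e=[u,w]\in E$, now viewed as a midpoint vertex of $G^f$. The transient-path condition is not hard to verify: a transient path $\gamma'$ in $G^f$ either visits infinitely many $V$-vertices, in which case parabolicity of $m$ on $G$ (after loop-removal to extract a transient path in $G$) gives $\sum m(v_i)=\infty$, or it eventually stays in midpoints, in which case consecutive midpoints lie in a common face of $G$ and share a $V$-endpoint $v_i$ for which $m^f(e_i)\ge m(v_i)$, and the sequence $(v_i)$ is a walk in $G$ visiting infinitely many vertices, to which the same loop-removal argument applies. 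The obstruction is square-summability: the bound
\[
\sum_{e\in E} m^f(e)^2 \le \sum_{v\in V}\deg(v)\,m(v)^2
\]
is finite when $G$ has bounded valence (disposing of that case at once) but not in general.

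The main obstacle is therefore the unbounded-valence case. To keep $m^f$ square-summable I would scale the midpoint values down using the endpoint degrees, for instance $m^f(e)=m(u)/\sqrt{\deg(u)}+m(w)/\sqrt{\deg(w)}$ for $e=[u,w]$, which yields $\sum_e m^f(e)^2\le 2\sum_v m(v)^2<\infty$. Recovering the transient-path condition for midpoint-only paths passing through high-degree vertices is then the technical heart of the argument; I would exploit the fact that $G^f$ itself satisfies the bounded-valence property $p(6)$ (every midpoint has degree $6$ in $G^f$) in order to limit the length of midpoint subpaths that avoid low-degree vertices of $V$. As an alternative route, one could go through Lemma~\ref{HS}: $G$ VEL-parabolic implies $G$ cp-parabolic by He--Schramm, so a packing $\PP=(P_v)$ with carrier $\mathbb{C}$ exists, and one builds $\tau$-fat compact sets for $G^f$ by taking $P_v^f=P_v$ and, for each midpoint $e=[u,w]$, a compact connected set that straddles the tangent point of $P_u$ and $P_w$ and reaches into the two adjacent interstices; the technical difficulty here is calibrating fatness against multiplicity near high-valence circles.
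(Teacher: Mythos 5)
The first half of your argument is fine: $G^f$ is a semi-bounded refinement of $G$, so Lemma~\ref{L:finer} gives one direction (though $M=3$, not $1$: each edge of $G$ contains its two endpoints as well as the midpoint). The converse, however, is where the content of the lemma lies, and neither of your two sketches closes it. Your first route (transferring a parabolic v-metric from $G$ to $G^f$ directly) founders exactly where you say it does: with $m^f(e)=\max\{m(u),m(w)\}$ square-summability fails for unbounded valence, and with the rescaling $m^f(e)=m(u)/\sqrt{\deg(u)}+m(w)/\sqrt{\deg(w)}$ you lose the lower bound on transient paths --- a path in $G^f$ can cross the star of a high-degree vertex $v$ through only a bounded number of midpoints, picking up only $O(m(v)/\sqrt{\deg(v)})$, so the divergence of $\sum m$ along the shadow path in $G$ no longer forces divergence of $\sum m^f$. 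You explicitly leave this as ``the technical heart of the argument,'' so the proof is not complete.

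Your second, ``alternative'' route is in fact the paper's proof, but you stop short of the one idea that makes it work. Having obtained (via He--Schramm) a circle packing $\PP=(P_v)$ with nerve $G$ and carrier $\mathbb{C}$, embed $G$ with straight edges between circle centers; each face is then a Euclidean triangle whose \emph{inscribed circle} passes through the three tangency points $P_u\cap P_v$, $P_v\cap P_w$, $P_w\cap P_u$. For a midpoint vertex $e=[v,w]$ one takes $P_e$ to be the union of the closed inscribed disks of the two faces sharing $e$; these two disks both contain the tangency point $P_v\cap P_w$, so $P_e$ is compact, connected, and $(1/16)$-fat by Lemma~\ref{union}. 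This is precisely the ``calibration'' you flag as the remaining difficulty: fatness is automatic because the pieces are round disks, and the overlap multiplicity is at most $7$ \emph{independently of valence}, because the packed disks have disjoint interiors and the inscribed disks of distinct faces have disjoint interiors (the worst case is a tangency point $P_u\cap P_v$, which lies in $P_u$, $P_v$, and the five sets $P_{e'}$ built from the two faces adjacent to $[u,v]$). Adjacency in $G^f$ forces intersection of the corresponding sets, so Lemma~\ref{HS} applies and $G^f$ is VEL-parabolic. Without the inscribed-disk construction (or an equally explicit substitute with valence-independent fatness and multiplicity), the argument has a genuine gap.
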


\begin{proof}
Because $G^f$ is a (semi-)bounded refinement graph of $G$, VEL-parabolicity of
$G^f$ implies VEL-parabolicity of $G$ by Lemma~\ref{L:finer}.

Conversely, suppose $G$ is VEL-parabolic. Then by Corollary~0.5 of \cite{HS1}
and Theorem~1.2 of \cite{HS2},
there exists a circle packing $\PP = (P_v : v \in V)$ whose
nerve is combinatorially equivalent to $G$ and whose carrier is $\mathbb{C}$.
Therefore $G$ can be embedded in $\mathbb{C}$ so that each $v \in V$ is
the center of $P_v$ and each edge $[v,w] \in E$ is a straight line segment
connecting the centers of $P_v$ and $P_w$.

In this embedding, each face of $G$ is a Euclidean triangle, and
if $f$ is a face of $G$ with vertices $u,v,w$,
then the inscribed circle of $f$ passes through the
points $P_u \cap P_v$, $P_v \cap P_w$, and $P_w \cap P_u$ (Figure~\ref{incircle}). Therefore,
if $e=[v,w]$ is an edge of $G$ and $f_1$ and $f_2$ are the faces of $G$
sharing $e$, the union of closed inscribed disks of $f_1$ and $f_2$
is a connected compact set in $\mathbb{C}$, because these two disks
meet at $P_v \cap P_w$. We denote by $P_e$ the union of these two disks.
\begin{figure}[bht]
\begin{center}
\input{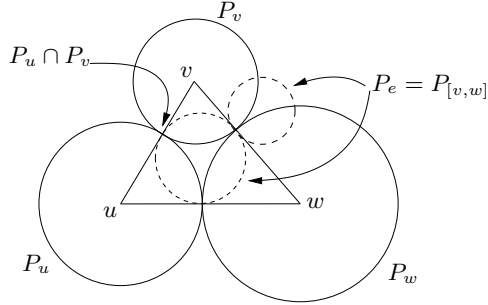}
\caption{inscribed circles and packed disks}\label{incircle}
\end{center}
\end{figure}

Now let $\PP' = (P_w : w \in V^f = V \sqcup E)$; i.e., for $w \in V$ we assign the disk $P_w$ of the circle packing $\PP$,
and for $w \in V^f \setminus V = E$ we assign the union of the inscribed disks tangent to $w$. Then because each disk is (1/4)-fat,
every $P_w$ is (1/16)-fat by Lemma~\ref{union}. Moreover, for every
$x \in \mathbb{C}$ one can easily check that there are at most seven vertices $w \in V^f$
such that $x \in P_w$. (Seven overlapping can actually occur when
$\{ x \} = P_u \cap P_v$ for some $[u,v] \in E$.) It is also trivial that $\PP'$ is locally finite and
that if $[w,w'] \in E^f$ then $P_w \cap P_{w'} \ne \emptyset$.
Thus by Lemma~\ref{HS}, we conclude that $G^f$ is VEL-parabolic.
\end{proof}

\begin{proof}[Proof of Propositions~\ref{T} and \ref{TT}]
We first prove Proposition~\ref{TT}. Suppose $G$ is a disk triangulation graph and $G'$ is a semi-bounded refinement graph of $G$.
If $G'$ is VEL-parabolic,   so is $G$ by Lemma~\ref{L:finer}. Conversely, suppose $G$ is VEL-parabolic.
Let $G^f$ be the graph as in Lemma~\ref{sfiner}, and $G''$ a common semi-bounded refinement graph of both $G^f$ and $G'$.
Then Lemma~\ref{sfiner} implies that  $G^f$ is VEL-parabolic, hence we know from Lemma~\ref{L:cfiner} that
$G''$ is also VEL-parabolic, since $G^f$ satisfies the property $p(6)$.
Now the VEL-parabolicity of $G'$ follows from the VEL-parabolicity of $G''$ and Lemma~\ref{L:finer}.
This completes the proof of Proposition~\ref{TT}.

Proposition~\ref{T} is actually an easy corollary of Proposition~\ref{TT}.
Suppose $T_0$ and $\T_0$ are finite triangulations of $\overline{\mathbb{C}}$
containing all the base points of $(X,h)$ in their vertex sets and
such that their pull-back graphs $T = h^{-1}(T_0)$ and $\T = h^{-1}(\T_0)$
are disk triangulation graphs. Let $\Lambda_0$ be a \emph{finite} common refinement graph of $T_0$ and $\T_0$, and note that
$\Lambda_0$ must be a (semi-)bounded refinement graph of both $T_0$ and $\T_0$ since it is finite.
Then the pull-back graph $\Lambda = h^{-1} (\Lambda_0)$ is a semi-bounded refinement graph of $T$ and $\T$.
Then $T$ is VEL-parabolic if and only if $\Lambda$ is VEL-parabolic by Proposition~\ref{TT},
and similarly $\T$ is VEL-parabolic if and only if $\Lambda$ is VEL-parabolic. Since VEL-parabolicity is
equivalent to cp parabolicity for disk triangulation graphs, this proves Proposition~\ref{T}.
\end{proof}

Observing the above proof, we can get the following additional result. For given $(X, h) \in F_q$, let $\Gamma_0'$ be the
base curve as defined in the introduction, and $\Gamma_0$ its dual. Then the graph
$\Gamma_0$ has exactly two vertices, say $\circ$ and $\times$,
and the vertices of $\Gamma_0'$ is nothing but the base points
$a_1, \ldots, a_q$ of $(X,h)$. For each $j=1,\ldots,q$,
we draw a Jordan arc $\ell_j$ from $\circ$ to $\times$ so that it intersects $\Gamma_0 \cup \Gamma_0'$ only
at $a_j$ and the endpoints $\circ, \times$. Now we define $\Lambda_0$ by
\[
 \Lambda_0 := \Gamma_0 \cup \Gamma_0' \cup \ell_1 \cdots \cup \ell_q.
\]
Then $\Lambda_0$ is a triangulation of $\overline{\mathbb{C}}$ which contains
all the base points in its vertex set, and the pull-back graph
$\Lambda := h^{-1} (\Lambda_0)$ is a disk triangulation graph.

Note that the graph $\Lambda$ can be obtained from the Speiser graph $\Gamma = h^{-1} (\Gamma_0)$ or its dual graph $\Gamma' = h^{-1} (\Gamma'_0)$
by dividing each face with $k$ edges into $2k$ triangles; i.e., if $f$ is a face of $\Gamma$ or $\Gamma'$,
then we pick point in the interior of $f$, and connect it
to the vertices on $\partial f$ and the midpoints of the edges surrounding $f$, so that
$f$ is divided into $2k$ triangles. Therefore one can see that $\Lambda$ is
a semi-bounded refinement graph of $\Gamma$ and $\Gamma'$. (In fact, $\Lambda$ is a bounded refinement graph of $\Gamma'$ since every face of $\Gamma'$ is
a $q$-gon. Also note that $\Lambda$ satisfies the property $p(K)$ with $K = \max \{ 4, 2q \} = 2q$.)
Now by Lemma~\ref{L:finer}, VEL-hyperbolicity of $\Gamma$ or $\Gamma'$
implies that of $\Lambda$, and we have:

\begin{corollary}\label{hyperbolic}
Suppose $(X,h)$ is a Riemann surface of class $\cS$. If either the Speiser graph corresponding
to $(X,h)$ or its dual is VEL-hyperbolic, $(X,h)$ is of cp hyperbolic type.
\end{corollary}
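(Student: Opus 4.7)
The plan is to combine the explicit triangulation $\Lambda_0$ constructed in the paragraph preceding the corollary with Lemma~\ref{L:finer}, together with Definition~\ref{D:cptype} and the He--Schramm equivalence between VEL-parabolicity and cp parabolicity for disk triangulation graphs. Essentially, Corollary~\ref{hyperbolic} is a packaging of facts that are already in place.

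First, I would take the triangulation $\Lambda_0 = \Gamma_0 \cup \Gamma_0' \cup \ell_1 \cup \cdots \cup \ell_q$ described just above the corollary, and its pull-back $\Lambda = h^{-1}(\Lambda_0)$. As noted there, $\Lambda$ is a disk triangulation graph containing the vertices of both $\Gamma$ and $\Gamma'$, and it is a semi-bounded refinement of each of $\Gamma$ and $\Gamma'$, because each $k$-gonal face of $\Gamma$ or $\Gamma'$ is divided into $2k$ triangles by $\Lambda$ (and $k = q$ in the case of $\Gamma'$, giving the stronger property of boundedness). In particular, both $\Gamma \to \Lambda$ and $\Gamma' \to \Lambda$ satisfy the hypotheses of Lemma~\ref{L:finer}.

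Next, suppose one of $\Gamma$ or $\Gamma'$ is VEL-hyperbolic; call this graph $G$. By the contrapositive of Lemma~\ref{L:finer} applied to the refinement $\Lambda$ of $G$, the graph $\Lambda$ must itself be VEL-hyperbolic. Since $\Lambda$ is a disk triangulation graph, the He--Schramm theorem (Theorem~1.2 of \cite{HS2}, as invoked earlier in the paper) tells us that VEL-parabolicity and cp parabolicity are equivalent for $\Lambda$; hence $\Lambda$ is cp hyperbolic.

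Finally, $\Lambda_0$ is a finite triangulation of $\overline{\mathbb{C}}$ whose vertex set contains every base point $a_1,\ldots,a_q$, and $\Lambda$ is a disk triangulation, so $\Lambda_0$ is an admissible choice of the triangulation $T_0$ in Proposition~\ref{T} and Definition~\ref{D:cptype}. Therefore $\Lambda$ being cp hyperbolic means precisely that $(X,h)$ is of cp hyperbolic type, completing the proof. There is no real obstacle here beyond verifying the semi-bounded refinement claim, which is already handled by the explicit triangle-subdivision picture above the corollary; the rest is direct application of Lemma~\ref{L:finer}, Proposition~\ref{T}, and the He--Schramm equivalence.
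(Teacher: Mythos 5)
Your proposal is correct and follows essentially the same route as the paper: the paper's own justification is precisely the paragraph preceding the corollary, which constructs $\Lambda_0$ and $\Lambda$, observes that $\Lambda$ is a semi-bounded refinement graph of both $\Gamma$ and $\Gamma'$, and then invokes Lemma~\ref{L:finer} so that VEL-hyperbolicity of either graph passes to $\Lambda$, whence cp hyperbolicity of $(X,h)$ via the He--Schramm equivalence and Definition~\ref{D:cptype}. Your added remark that $\Lambda_0$ is an admissible choice of $T_0$ in Proposition~\ref{T} just makes explicit a step the paper leaves implicit.
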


\begin{remark}\label{remk}
It would be more natural to define cp type for every surfaces of class $\cS$, even in the presence of
asymptotic values. Thus suppose $(X,h) \in F_q$ and let $\Gamma$ be its corresponding Speiser graph. The graph $\Gamma$
would have some infinite faces when $h$ has asymptotic values, but we can still divide each \emph{finite}
face of $\Gamma$ into triangles, and obtain a graph similar to $\Lambda$ described before Corollary~\ref{hyperbolic}.

The obtained graph, which we still denote by $\Lambda$, is no longer a disk triangulation graph, but
it is possible to check if $\Lambda$ is VEL-parabolic or hyperbolic.
Thus we can naturally extend the definition of cp types, even in the presence of asymptotic values,
according to the cp type of $\Lambda$; i.e., we can define that $(X,h) \in F_q$ is cp parabolic if and only if $\Lambda$ is
VEL-parabolic.
\end{remark}

\section{Proof of Theorem~\ref{Example}}\label{S:Ex}
Suppose a Speiser graph $\Gamma$ is given. If $\Gamma$ is VEL-hyperbolic, then Corollary~\ref{hyperbolic} implies that
the corresponding surface $X \in F_q$ is cp hyperbolic. On the other hand, since every Speiser graph is of bounded valence,
VEL-hyperbolicity of $\Gamma$ is equivalent to its transiency, which then implies hyperbolicity of $X$
(\cite{Do} or \cite{Me}; see the Doyle's criterion below).  Therefore the conformal and cp types agree in this case.

If $\Gamma$ is VEL-parabolic (hence recurrent), however,
the cp type of $X$ could be either one. Note that
the disk triangulation graph $\Lambda$ described before Corollary~\ref{hyperbolic} is a semi-bounded refinement graph of $\Gamma$,
but the proof for Lemma~\ref{L:cfiner} does not work in this case since $\Gamma$ is not a triangulation.
Definitely neither does the VEL-parabolicity of $\Gamma$ guarantee the parabolicity of $X$ in the conformal type,
so one has to investigate recurrent Speiser graphs for a Riemann surface of class $\cS$ with different cp and conformal types.
Our strategy  is to construct a Speiser graph whose growth rate is very slow
so that the corresponding surface is conformally parabolic, while its dual graph is VEL-hyperbolic. Then
the surface will be cp hyperbolic by Corollary~\ref{hyperbolic}.

First, we start with a Speiser graph $\Psi = (V_\Psi, E_\Psi)$ such that both the graph $\Psi$
and its dual graph $\Psi'$ are VEL-hyperbolic. More specifically, for $\Psi$ we pick  the regular graph of degree 3
such that every face of it is an octagon, so that the dual graph $\Psi'$ is a regular graph of degree 8 which is
a disk triangulation. Fix a vertex $v_0$ of $\Psi$, and let $S_\Psi (n)$ be
the combinatorial sphere of radius $n$ and centered at $v_0$; that is, $S_\Psi (n)$ is the
set of vertices in $V_\Psi$ whose combinatorial distance from $v_0$ is equal to $n$.
Let $B_\Psi (n) = \bigcup_{k=0}^n S_\Psi (k)$ be the combinatorial ball of radius $n$ and centered at $v_0$,
and let $E_\Psi (n)$ be the set of edges in $E_\Psi$
with one end on $S_\Psi (n)$ and the other end on $S_\Psi (n+1)$.

We next choose a sequence of natural numbers (odd numbers) $\{l_n\}_0^\infty$ which will be determined later. Then
for each edge in $E_\Psi (0)$, we replace it by an unbranched tree of length $l_0$
so that the graph remains to be a Speiser graph.
Note that every Speiser graph is bipartite, so the vertex set $V_\Psi$ is divided into two classes,
say vertices tagged with $\circ$ and those with $\times$,
and every edge must connect two vertices with different tags.
Moreover, $\Psi$ is the regular graph of degree 3 with octagonal faces, thus in order
to make the modified graph remain to be a Speiser graph, the number $l_0$ must be odd and
every second edge in this unbranched tree of length $l_0$ must be a double edge (Figure~\ref{br}).
\begin{figure}[h!]
\begin{center}
\includegraphics{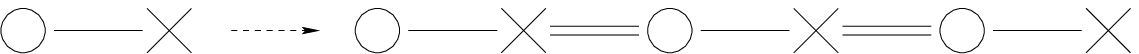}
\caption{replacing an edge with an unbranched tree of length 5}\label{br}
\end{center}
\end{figure}
Similarly we replace every edge in $E_\Psi (1)$ by an unbranched tree of length $l_1$ so
that the graph remains to be a Speiser graph, and repeat this process for all edges in $E_\Psi (n)$, $n=2,3,\ldots$.

The resulting graph $\Gamma = (V_\Gamma, E_\Gamma)$ we have just obtained from this process is a Speiser graph whose
dual graph $\Gamma'$ is VEL-hyperbolic. This is because $\Gamma'$ contains $\Psi'$ as a subgraph,
and we chose $\Psi'$ to be VEL-hyperbolic.
We conclude that the surface $X \in F_q$ associated
with $\Gamma$ is cp hyperbolic by Corollary~\ref{hyperbolic}.

It remains to show that the surface $X \in F_q$ associated with $\Gamma$
is conformally parabolic, for which we use the criterion of Dolye (\cite{Do}; cf. \cite{Me}): `the Riemann
surface of class $\cS$ associated with a given Speiser graph is parabolic
if and only if the corresponding \emph{extended Speiser graph} is recurrent'.
Note that the extended Speiser graph of a given Speiser graph is a planar graph with \emph{infinitely many ends}, obtained
by adding infinitely many square grids to all the faces of the Speiser graph (Figure~\ref{ES}).

\begin{figure}[hbt]
\begin{center}
\includegraphics{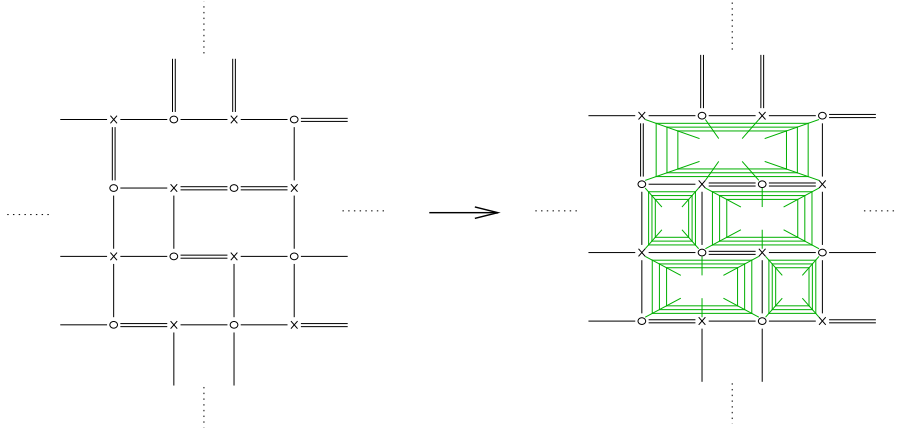}
\caption{a Speiser graph of degree 4 and its extended graph, where the extended part in each bigons(2-gons) are omitted}\label{ES}
\end{center}
\end{figure}

Let $\Upsilon = (V_\Upsilon, E_\Upsilon)$ be the extended Speiser graph obtained from $\Gamma$, and note that the vertex
$v_0$ of $\Psi$, the center of the spheres and balls in the construction of $\Gamma$,
may be regarded as a vertex in $\Gamma$, or even as a vertex in $\Upsilon$. Now we
let $S_\Gamma (n)$, $S_{\Upsilon} (n)$, $B_\Gamma (n)$, $B_{\Upsilon} (n)$
be combinatorial spheres and balls in $\Gamma$ and $\Upsilon$, respectively, with centered at $v_0$ and radius $n$.

Depending on the growth rate of $|B_\Psi (n)|$, the number of vertices in $B_\Psi (n)$, we
can choose the sequence $l_n$ increasing so fast that $|B_\Gamma (k)| \leq k \log k$ for sufficiently
large $k$. More specifically, we have $| S_\Psi(n) | \leq 3^{n}$ since $\Psi$ is a regular graph of degree 3.
This means that $|S_\Gamma (k) | \leq 3^{n}$ for $k \leq \sum_{j=0}^{n} l_j$.
Thus if we define $l_n := \exp(3^{n+1})$,  then for  $1+ \sum_{j=0}^{n-1} l_j \leq k \leq  \sum_{j=0}^{n} l_j$ we have
\[
|B_\Gamma (k)| \leq k 3^n  = k \log l_{n-1} \leq k \log k.
\]

On the other hand,  $\Upsilon$ is a graph whose vertices have degree at most $6$ such that
for each $m \in \mathbb{N}$ and $w \in V_\Gamma$
there are exactly $3$ vertices in $V_\Upsilon \setminus V_\Gamma$ which are directly \emph{$m$ steps above} $w$.
In other words, if $w$ has distance $m-k$ from $v_0$, then only 3 vertices in $V_\Upsilon \setminus V_\Gamma$
are directly above $w$ and have distance $m$ from $w$. Therefore we must have
\[
|S_{\Upsilon} (k) | \leq |B_\Gamma (k)| + |S_{\Upsilon} (k) \setminus B_\Gamma (k) |
\leq k \log k + 3 k \log k = 4 k \log k
\]
for sufficiently large $k$. Therefore there exists a constant $C$ such that
$|B_\Upsilon (k)| \leq C k^2 \log k$ for sufficiently large $k$.
Since $\Upsilon$ is of bounded valence, the Nash-Williams criterion (cf. \cite[p. 56]{So})
shows that $\Upsilon$ is recurrent. We conclude that the surface $X \in F_q$ associated with $\Gamma$ is parabolic
by the Doyle's criterion mentioned above, and this completes the proof of Theorem~\ref{Example}.


\end{document}